\def\A{\mathbb{A}}
\def\B{\mathbb{B}}
\def\Q{\mathbb{Q}}
\def\R{\mathbb{R}}
\def\C{\mathbb{C}}
\def\F{\mathbb{F}}
\let\myacute=\'
\def\emptyset{\varnothing}
\def\<{\langle}
\def\>{\rangle}
\def\Z{\mathbb{Z}}
\def\cL{\mathcal{L}}
\def\cM{\mathcal{M}}
\def\cN{\mathcal{N}}
\def\cS{\mathcal{S}}
\def\cP{\mathcal{P}}
\def\cB{\mathcal{B}}
\def\cC{\mathcal{C}}
\def \begindm {\begin{displaymath}}
\def \enddm {\end{displaymath}}
\def\C{\mathbb{C}}
\def\R{\mathbb{R}}
\def\Q{\mathbb{Q}}
\def\F{\mathbb{F}}
\def\cL{\mathcal{L}}
\def\cM{\mathcal{M}}
\def\cO{\mathcal{O}}
\newtheorem{thm}{Theorem}[section]
\newtheorem{lemma}{Lemma}[section]
\newtheorem{cor}{Corollary}[section]
\newtheorem{prop}{Proposition}[section]
\newtheorem{claim}{Claim}[section]
\numberwithin{equation}{section}
\long\def\symbolfootnote[#1]#2{\begingroup\def\thefootnote{\fnsymbol{footnote}}\footnote[#1]{#2}\endgroup}
\title{Decidability Problems for Adele Rings and related restricted products}
\author[J. Derakhshan]{Jamshid Derakhshan}
\address{St Hilda's College, University of Oxford, Cowley Place, Oxford OX4 1DY, and Mathematical Institute, Oxford, OX2 6GG, UK}
\email{derakhsh@maths.ox.ac.uk}
\author[A. Macintyre]{Angus Macintyre${}^{\dag}$}
\address{Queen Mary, University of London,
School of Mathematical Sciences, Queen Mary, University of London, Mile End Road, London E1 4NS, UK}
\email{angus@eecs.qmul.ac.uk}
\thanks{${}^{\dag}$Supported by a Leverhulme Emeritus Fellowship}
\begin{document}

\keywords{}

\subjclass[2000]{}

\begin{abstract} 
We study elementary equivalence of adele rings and decidability for adele rings of general number fields. We prove that elementary equivalence of adele rings (of two number fields)
implies isomorphism of the adele rings.

\end{abstract}

\maketitle

\section{\bf Introduction}\label{sec-introduction}

In \cite{DM-ad},\cite{DM-supp}, technology from the classic paper \cite{FV} by Feferman-Vaught is combined with the classic  work of Ax-Kochen-Ersov to give quantifier eliminations for certain rings which are restricted products of certain Henselian fields with respect to their valuations. There are corresponding decidability results, but no systematic study of decidability problems. The theory applies to all adele rings $\A_K$ over number fields, but applies to many other related structures. There are serious problems in determining what is special, model-theoretically, about the adele rings $\A_K$.

The decidability of $\A_{K}$, where $K$ is a number fied, has been known since Weispfenning \cite{weisp-hab}, but there have been no explicit discussion of axioms or uniformity (in $K$). This is rectified in the present paper.

\cite{FV} applies to a huge range of structures associated to products of structures. "Associated" typically means definable or interpretable in the appropriate product structure. For us in \cite{DM-ad},\cite{DM-supp} the main examples were the adele rings of number fields, and there we combined the very general quantifier-elimination of \cite{FV} with classical model-theoretic work on Henselian fields, and classic model-theoretic work on Boolean algebras (Tarski, Feferman-Vaught) to get purely ring-theoretic elimination theory in the adele rings and set some foundations for adelic measures and integrals in adele geometry. In \cite{DM-ad} extensions of the Boolean results of Tarski-Feferman-Vaught that we proved in \cite{DM-Bool} were applied to get elimination theory in a language stronger than the ring language in relation to reciprocity laws in number theory. We paid little attension to decidability, and we now turn to this aspect, where \cite{FV} provides very powerful results.

Among the problems considered below are:

(i) Decidability of the individual $\A_K$, $K$ a number field,

(ii) Uniform decidability of the class of all $\A_K$,

(iii) Isomorphism and elementary equivalence for adele rings.

\section{\bf The generalized product of Feferman-Vaught}\label{fv}

For applications to decidability it is useful to spell out the essential data of the generalized product construction.

{\bf Item 1} The first-order language $L_{Boolean}$ of Boolean algebras, with signature $\{\wedge,\vee,\neg,0,1\}$.

{\bf Item 2} For sets $I$, the atomic Boolean algebras $Pow(I)$, the powerset of $I$.

{\bf Item 3} An expansion $L_{Boolean+}$ of $L_{Boole}$.

A crucial example is got by adding a unary predicate $Fin(x)$, to be interpreted in $Pow(I)$ by the ideal of finite subsets of $I$. 

{\bf Item 4} A first-order language $L$ disjoint from $L_{Boolean+}$.

{\bf Item 5} Let $\mathcal{F}$ be a family $(\mathcal{M}_i)_{i\in I}$ of $L$-structures and $\prod_{i\in I} \mathcal{M}_i$ the product. Let $\Theta(w_1,\dots,w_k)$ be an $L$-formula. Then, for $f_0,\dots,f_k \in \prod_{i\in I} \cM_i$, we define 
$$[[\theta(f_0,\dots,k_k)]]=\{i\in I: \cM_i\models \theta(f_0(i),\dots,f_k(i)\}.$$
For $(\cM_i)_{i\in I}$ fixed, we have the function $[[\Theta]]$ from $(\prod_{i\in I}\cM_i)^{k+1}$ to $Pow(I)$ given by 
$$[[\Theta]](f_0,\dots,f_k)=[[\Theta(f_0,\dots,f_k)]].$$
This observation is just to stress the functorial nature of the construction.

{\bf Item 6} (Continuing from Item 5) If $\Psi(v_0,\dots,v_k)$ is an $\cL_{Boole+}$-formula, and 
$$\bar{\theta}=(\theta_0(w_0,\dots,w_{k-1}),\dots,\theta_n(w_0,\dots,w_{k-1})),$$
we define $\Psi(\bar{\theta})$ as the $k$-ary relation on $\prod_{i\in I} \cM_i$ consisting of the $(f_0,\dots,f_{k-1})$ such that 
$$Pow(I)^+ \models \Psi([[\theta_0(f_0,\dots,f_{k-1})]],\dots,[[\theta_n(f_0,\dots,f_{k-1})]]).$$
Note that $\Psi(\bar{\theta})$ is well-defined provided $\Psi$ has arity $n$, each $\theta_i$ has arity $k$, and $\bar{\theta}$ has length $n$. If $L_{Boole+}$ and $L$ are computably given, then the pairs $(\Psi,\bar{\theta})$ with $\Psi(\bar{\theta})$ well-defined are computably given.

{\bf Item 7} (Continuing Item 6) Expand $L$ by all $k$-ary relations $\Psi(\bar{\theta})$ as above. This gives an expansion 
$L^{Boole+}$ of $L$. Note that is this independent of $Pow(I)^+$. However, from $Pow(I)^+$ we get an $L^{Boole+}$-structure on the product $\prod_{i\in I} \cM_i$. 

{\bf Item 8} The big theorem is that as $L^{Boole+}$-structure $\prod_{i\in I} \cM_i$ has quantifier-elimination, 
and moreover computably (if $L_{Boole+}$ amd $L$ are given computably).

In \cite{FV} the results are stated a bit differently, in terms of acceptable and partitioning sequences. This makes the proofs a bit more direct, but the two formalisms are effectively equivalent.

{\bf Item 9} The preceding now yields powerful preservation theorems. The first yields a preservation result for elementary equivalence. One starts as usual with with fixed $L_{Boole+}$ and $L$, and a set $I$. Two families of $L$-structures $(\cM_i)_{i\in I}$ and $(\cN_i)_{i\in I}$ are given, with $\cM_i \equiv \cN_i$ for all $i$ as $L$-structures. Then $\prod_{i\in I} \cM_i \equiv \prod_{i\in I} cN_i$ as $L^{Boole+}$-structures (cf. \cite{FV}, Theorem 5.1, page 76).

{\bf Item 10}. (Notation as preceding) Suppose $\cM_i \rightarrow \cN_i$ are elementary, then (easily) 
$\prod_{i\in I} h_i: \prod_{i\in I} \cM_i \rightarrow \prod_{i\in I} \cN_i$ is an $L^{Boole+}$-embedding, and is 
$L^{Boole+}$-elementary (cf. \cite{FV}, Theorem 5.2, page 77).

{\bf Item 11}. Let $\cS$ be a fixed class of $L_{Boole+}$-structures on some $Pow(I)$ (maybe a single $I$, maybe several). 
Let $\cC$ be a class of $L$-structures, and $\cB$ an $L_{Boole+}$-structure (in $\cS$, which is understood), living on $Pow(I)$. Let $(\cM_i)_{i\in I}$ be a family of structures from $\cC$. Then one defines $\cP((\cM_i)_{i\in I},\cB)$ as the generalized product defined earlier (in Item 7).

We write $\cP(\cC,\cB)$ for the class of all auch products.

{\bf Item 12}. (Continue notation) This concerns Theorem 5.6 of \cite{FV}.

\begin{thm}\cite[Theorem 5.6]{FV} If $\cS$ is decidable and $\cC$ is decidable, then so is $\cP(\cC,\cB)$.\end{thm}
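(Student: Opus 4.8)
The plan is to reduce the problem to the computable quantifier elimination of Item 8 together with the given decision procedures for the common theories $\mathrm{Th}(\cC)$ and $\mathrm{Th}(\cS)$ --- which is what ``$\cC$ decidable'' and ``$\cS$ decidable'' mean, and ``$\cP(\cC,\cB)$ decidable'' likewise means that the set of $L^{Boole+}$-sentences holding in every generalized product $\cP((\cM_i)_{i\in I},\cB)$, with $(\cM_i)$ a family from $\cC$ and $\cB$ ranging over $\cS$, is computable. First I would take an $L^{Boole+}$-sentence $\sigma$ and apply Item 8 to compute a quantifier-free $L^{Boole+}$-sentence equivalent to it in every such product; a quantifier-free $L^{Boole+}$-sentence is a Boolean combination of $0$-ary relations $\Psi(\bar\theta)$ whose entries $\bar\theta$ are $L$-sentences, so, letting $\theta_1,\dots,\theta_m$ list the finitely many $L$-sentences occurring, I can fold it into one expression: a computable $L_{Boole+}$-formula $\Phi(v_1,\dots,v_m)$ with $\sigma\iff\Phi([[\theta_1]],\dots,[[\theta_m]])$ in every product.

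Next I would determine which values $\Phi([[\theta_1]],\dots,[[\theta_m]])$ can take in $\cP((\cM_i)_{i\in I},\cB)$. For $S\subseteq\{1,\dots,m\}$ set $\chi_S=\bigwedge_{l\in S}\theta_l\wedge\bigwedge_{l\notin S}\neg\theta_l$ and $I_S=\{i\in I:\cM_i\models\chi_S\}$; then the $I_S$ partition $I$ and $[[\theta_l]]=\bigcup_{S\ni l}I_S$. Letting $\Phi'\big((x_S)_S\big)$ be the $L_{Boole+}$-formula obtained from $\Phi$ by substituting $\bigvee_{S\ni l}x_S$ for $v_l$, the sentence $\sigma$ holds in $\cP((\cM_i)_{i\in I},\cB)$ iff $\cB\models\Phi'\big((I_S)_S\big)$. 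Moreover $I_S$ is forced to be empty unless $\chi_S$ is satisfiable in $\cC$, i.e.\ unless $\neg\chi_S\notin\mathrm{Th}(\cC)$; conversely every partition $(J_S)_S$ of $I$ with $J_S=\emptyset$ whenever $\neg\chi_S\in\mathrm{Th}(\cC)$ is realized by a suitable family from $\cC$ (for $i\in J_S$ choose $\cM_i\in\cC$ with $\cM_i\models\chi_S$). Since $\mathrm{Th}(\cC)$ is decidable we can compute the set $\mathcal{R}$ of all $S$ with $\neg\chi_S\notin\mathrm{Th}(\cC)$.

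Finally, since each $\cB\in\cS$ lives on a full powerset $Pow(I)$, the tuples $(x_S)_S$ of pairwise disjoint elements with join $1$ are precisely the partitions of $I$; hence $\sigma\in\mathrm{Th}(\cP(\cC,\cB))$ iff $\sigma^{\ast\ast}\in\mathrm{Th}(\cS)$, where $\sigma^{\ast\ast}$ is the $L_{Boole+}$-sentence
\[
\forall (x_S)_S\,\Big[\Big(\,\bigwedge_{S\ne S'}x_S\wedge x_{S'}=0\ \wedge\ \bigvee_S x_S=1\ \wedge\ \bigwedge_{S\notin\mathcal{R}}x_S=0\,\Big)\rightarrow\Phi'\big((x_S)_S\big)\Big].
\]
The map $\sigma\mapsto\sigma^{\ast\ast}$ is computable (computable quantifier elimination, then finitely many queries to the decision procedure for $\mathrm{Th}(\cC)$ to build $\mathcal{R}$, then mechanical assembly), so decidability of $\mathrm{Th}(\cS)$ gives decidability of $\mathrm{Th}(\cP(\cC,\cB))$.

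The real content here is Item 8; the rest is bookkeeping --- tracking arities in the quantifier elimination and handling degenerate cases such as $\cC=\emptyset$ or empty index sets. The one delicate point is the last identification, of internal partitions of $1$ in $\cB$ with external partitions of the index set $I$: this is exactly where the hypothesis that the structures in $\cS$ are (expansions of) genuine powerset algebras is used, and it is also where one must read ``decidable'' as decidability of the common theory of the class rather than of its individual members.
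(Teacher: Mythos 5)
Your argument is correct and is essentially the proof of Feferman--Vaught's Theorem 5.6 itself: the paper offers no proof of this statement beyond the citation to \cite{FV}, and what you have written is the standard argument there (use the effective quantifier elimination to reduce $\sigma$ to a single Boolean condition $\Phi([[\theta_1]],\dots,[[\theta_m]])$, observe that the realizable tuples are exactly those arising from partitions supported on the atoms $\chi_S$ satisfiable in $\cC$ --- computable from the decision procedure for $\mathrm{Th}(\cC)$ --- and then relativize the resulting Boolean sentence to such partitions and hand it to the decision procedure for $\mathrm{Th}(\cS)$). Your closing remarks correctly isolate the two points that need care, namely that the structures in $\cS$ are full powerset algebras (so internal partitions of $1$ are external partitions of $I$) and that ``decidable'' refers to the common theory of the class.
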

This is extremely powerful, when combined with method of interpretations.

\section{\bf Decidability Theorems for Restricted Products}

Here $L$ is the language of rings. $L_{Boole+}$ is got from $L$ by adding a unary predicate $Fin(x)$, which is intended, in structures $Powerset(I)$ to stand for the filter of finite sets. It has long been known that if $I_1$ and $I_2$ are infinite, ,then 
$(Powerset(I_1),Fin) \equiv (Powerest(I_2),Fin)$, while for finite $I$ the theory of $(Powerset(I),Fin)$ is of course just the theory of the finite $Powerset(I)$. An explicit set of axioms (axioms for atomic Boolean algebras) is known. This set is recursive and its complete extensions is given by the $min(card(I),\infty)$. The theory of atomic Boolean algebras is decidable, and have a simple uniform quantifier elimination.

For infinite $I$, these will be our main example of $\cB$'s. The class $\cC$ of $L$-structures we study will be subclasses of the class of all finite extensions of the fields $\Q_p$ as $p$  varies, together with $\C$ and $\R$. The latter class is not known to be decidable, though the class of all $\Q_p$ together with $\C$ and $\R$ is decidable by deep work of Ax \cite{ax} (supplemented by classical work of Tarski, see \cite{KK}).

So if we take $\cC_0$ to be the class consisting of each $\Q_p$ and $\R$, then $\cP(\cC_0,\cB)$ is decidable by Feferman-Vaught \cite{FV}. This result is of no particular interest, but leads to more dramatic results.

It is a non-trivial fact \cite{CDLM} that there is a uniform first-order definition (in $L$) of the valuation rings $\cO_p$ as $p$ varies. Indeed there is a uniform first-order definition of the unit balls of the $\Q_p$ and $\R$, say by a formula $UnitBall(x)$. Now consider inside any element of $\cP(\cC_0,\cB)$ the definable subset given by $Fin([[\neg UnitBall(w)]])$. This is the set of all $f$ in the product which take values in the unit ball at all but finitely many $i\in I$. In the case above this is a subring. In the special case of the product $(\prod_{p} \Q_p) \times \R$ this is the ring $\A_{\Q}$ of adeles over $\Q$, a special case of the restricted product. Note that the decidability for $\cP(\cC_0,\cB)$ has no implications for the decidability of the individual adele ring $\A_{\Q}$. However, the proof of Theorem 5.6 in \cite{FV} (stated above in Item 12) shows much more than the statement says. The proof works locally, that is when there is just one family $(\cM_i)_{i\in I}$ and a $\cB$ and gives a quantifier elimination in the language of generalized products $L^{Boole+}$. Combined with the work of Ax \cite{ax} and Ax-Kochen this is enough to give

\begin{thm} The theory of $\A_{\Q}$ is decidable.\end{thm}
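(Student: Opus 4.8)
The plan is to realize $\A_{\Q}$ as a structure definable with parameters inside a single generalized product $\cP((\cM_i)_{i\in I},\cB)$ for a computably presented family, and then invoke the \emph{local} form of the Feferman--Vaught machinery (Items 5--8), which gives not merely decidability of the class $\cP(\cC,\cB)$ but an effective quantifier elimination for the individual product in the language $L^{Boole+}$. Concretely, I would take $I=\mathrm{Spec}(\Z)\cup\{\infty\}$, $\cM_p=\Q_p$ for each prime $p$, $\cM_\infty=\R$, and $\cB=(Pow(I),Fin)$. As noted in the excerpt, $\A_{\Q}$ sits inside $\prod_{i\in I}\cM_i$ as the $L^{Boole+}$-definable subring cut out by $Fin([[\neg UnitBall(w)]])$, using the uniform definition of the unit balls from \cite{CDLM}. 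So every $L$-sentence about $\A_{\Q}$ (indeed every $L^{Boole+}$-sentence, relativized to this subring) is, by the effective QE of Item 8, equivalent to a Boolean combination of: (a) sentences in the Boolean algebra $\cB=(Pow(I),Fin)$, and (b) sentences of the form ``the set of $i$ where $\cM_i\models\theta$ is finite / cofinite / has exactly $n$ elements'' for $L$-sentences $\theta$.

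The decidability question thus reduces to deciding two things effectively: first, the theory of $\cB=(Pow(I),Fin)$ for $I$ countably infinite, which is the theory of atomic Boolean algebras with the finite-set ideal and is decidable with a simple uniform quantifier elimination (stated in the excerpt); and second, for each $L$-sentence $\theta$, the \emph{finite} or \emph{cofinite} sets of indices $i$ with $\cM_i\models\theta$. The latter is exactly where the Ax and Ax--Kochen/Ershov input enters. For the archimedean place there is nothing to decide beyond Tarski's decidability of $\R$. For the non-archimedean places, I would use that by Ax's work \cite{ax} (with the classical supplements \cite{KK}) the class of all $\Q_p$ is decidable and, more precisely, for each $L$-sentence $\theta$ the set of primes $p$ with $\Q_p\models\theta$ is a finite Boolean combination of Frobenius/Chebotarev-type conditions together with finitely many exceptional primes; in particular one can decide effectively whether $\{p:\Q_p\models\theta\}$ is finite, cofinite, or neither, and when finite one can list it. Feeding these answers into the Boolean-algebra decision procedure yields an algorithm deciding membership in $\mathrm{Th}(\A_{\Q})$.

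The main obstacle — and the point that needs care rather than a routine citation — is the passage from ``$\cP(\cC_0,\cB)$ is decidable'' to decidability of the \emph{individual} ring $\A_{\Q}$. The bare statement of Theorem 5.6 of \cite{FV} concerns the class of all products as $\cB$ and the family vary, and as the excerpt warns, decidability of the class says nothing directly about one member. What one must extract from the \emph{proof} of Theorem 5.6 is that it operates one product at a time: given $(\cM_i)_{i\in I}$ and $\cB$, it produces, effectively from the data, a translation of $L^{Boole+}$-sentences into $L_{Boole+}$-sentences about $\cB$ with ``numerical'' side conditions of type (b) above, so that decidability of $\mathrm{Th}(\A_{\Q})$ follows once we can decide $\mathrm{Th}(\cB)$ and, for each $\theta$, the cardinality-or-cofiniteness of $[[\theta]]$ in our specific family. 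I would therefore spend the bulk of the argument making this localized, effective version of Feferman--Vaught precise for the family $(\Q_p)_p$ together with $\R$, verifying that the only facts about the family that the translation consults are (i) the $L$-theories $\mathrm{Th}(\cM_i)$ up to the finitely many relevant sentences, and (ii) the induced filter structure on $I$, both of which are handled effectively by Ax--Kochen--Ershov and by the decidability of atomic Boolean algebras respectively. Once that reduction is in place, the remaining steps are bookkeeping.
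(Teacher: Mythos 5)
Your proposal follows essentially the same route as the paper: realize $\A_{\Q}$ as the subring $Fin([[\neg UnitBall(w)]])$ of $(\prod_p\Q_p)\times\R$ via the uniform definition of the valuation rings from \cite{CDLM}, exploit the fact that the proof of Theorem 5.6 of \cite{FV} works locally on a single family and yields an effective quantifier elimination in $L^{Boole+}$, and then discharge the resulting Boolean and local conditions using the decidability of atomic Boolean algebras with $Fin$, Ax's decidability of the class of all $\Q_p$, and Tarski for $\R$. The paper itself only sketches this and defers details to \cite{DM-ad}, but your identification of the key point --- that the bare statement of Theorem 5.6 is insufficient and one must localize its proof --- is exactly the point the paper emphasizes.
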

\begin{proof} See \cite{DM-ad} for details.\end{proof}

How to handle $\A_K$ for $K$ an arbitrary number field? The index set is now finitely branching over $Primes \cup \{\infty\}$. 
Aside from the real and complex factors, the product involves $K_{\cP_i}$ which contains $\Q_p$, where $\cP_i$ is over $p$ (of which there are finitely many for each $p$), and one has to figure out $K_{\cP_i}$ in terms of the residue field degree $f_i$ and ramification $e_i$. For this we appeal to the basic work of Kummer.

Pick $\alpha$ an algebraic integer, so that $K=\Q(\alpha)$. Let $f(x)$ be the monic minimal polynomial of $\alpha$ over $\Q$. Then $f\in \Z[x]$. 

Now fix a prime $p$ in $\Z$. Then $p$ lifts to finitely many primes $\cP_1,\dots,\cP_g$ in $\cO_K$, and 
$$p\cO_K=\cP_1^{e_1}\dots \cP_g^{e_g}$$
(usual prime decomposition). We put $e(\cP_i/p)=e_i$, and call it the ramification index of $\cP_i$ over $p$. Also, 
$\cO_K/\cP_i$ is a finite extension of $\F_p$ of dimension $f(\cP_i/p)$ over $\F_p$. The number $f_i$ is the residue degree of $\cP_i$ over $p$ In fact, if $K_{\cP_i}$ is the completion of $K$ at $\cP_i$, then $\Q_p\subseteq K_{\cP_i}$, as valued fields 
(both Henselian) and $e_i$ and $f_i$ are respectively the usual remification index and residue field degree.

Finally,
$$\sum_i e_if_i=[K:\Q].$$

If $[K:\Q]$ is given this already provides crude bounds for the $e_i$ and $f_i$ (i.e. $e_i,f_i\leq [K:\Q]$). The prime $p$ is unramified if $e_i=1$ for all $i$, and otherwise ramified. 

An important point is that if $p$ does not divide $N_{K/\Q}(f'(\alpha))$ then one has another way (uniform in $p$) for seeing the decomposition. It turns out that the above condition defines being ramified.

\begin{thm} For unramified $p$ the decomposition of $p\cO_K$ (up to order) is obtained as follows. Suppose 
$$f(x)=\prod_{i=1}^{n}g_i(x)^{e_1} ~ mod ~ p\cO_K[x]$$
where $e_i\geq 1$ and the $g_i$ are monic over $\Z$ such that their reductions modulo $p$ are irreducible and distinct. 
Then $\cP_i=p\cO_K+g_i(\alpha)\cO_K$ is a maximal ideal of $\cO_K$, for each $i$, and $p\cO_K=\cP_1^{e_1}\dots \cP_g^{e_g}$.\end{thm}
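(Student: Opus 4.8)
The plan is to reduce everything to the structure of the finite ring $\cO_K/p\cO_K$ and to analyze it in two ways. The hypothesis that actually does the work is $p\nmid N_{K/\Q}(f'(\alpha))$: since $N_{K/\Q}(f'(\alpha))=\pm\,\mathrm{disc}(f)$ and $\mathrm{disc}(f)=[\cO_K:\Z[\alpha]]^{2}\,\mathrm{disc}(\cO_K)$, this forces both $p\nmid\mathrm{disc}(\cO_K)$ (so $p$ is unramified) and, decisively, $p\nmid[\cO_K:\Z[\alpha]]$. The first step is to use the latter to see that $\Z[\alpha]\hookrightarrow\cO_K$ becomes an isomorphism after localizing at $p$ — the cokernel is killed by the $p$-adic unit $[\cO_K:\Z[\alpha]]$ — whence
\[
\cO_K/p\cO_K\;\cong\;\Z_{(p)}[\alpha]/p\Z_{(p)}[\alpha]\;\cong\;\Z[\alpha]/p\Z[\alpha]\;\cong\;\F_p[x]/(\bar f),
\]
the last isomorphism coming from $\Z[\alpha]\cong\Z[x]/(f)$, with $\bar f$ the reduction of $f$ modulo $p$.

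Next I would insert the factorization $\bar f=\prod_{i=1}^{n}\bar g_i^{\,e_i}$ with the $\bar g_i$ distinct and irreducible. The Chinese Remainder Theorem gives $\F_p[x]/(\bar f)\cong\prod_{i=1}^n B_i$, where $B_i:=\F_p[x]/(\bar g_i^{\,e_i})$ is local with maximal ideal $(\bar g_i)$ and residue field $\F_p[x]/(\bar g_i)$ of degree $\deg g_i$ over $\F_p$. Under the composite $\phi\colon\cO_K\to\prod_i B_i$, the element $\alpha$ goes to the tuple of classes of $x$, so $g_i(\alpha)$ goes to the tuple whose $j$-th coordinate is the class of $\bar g_i$ in $B_j$: a unit for $j\neq i$ (as $\bar g_i,\bar g_j$ are coprime) and a generator of the maximal ideal of $B_i$ for $j=i$. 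Hence $\phi(g_i(\alpha))$ generates the maximal ideal $\mathfrak m_i=B_1\times\cdots\times(\bar g_i)\times\cdots\times B_n$ of $\prod_j B_j$, so $\phi^{-1}(\mathfrak m_i)=p\cO_K+g_i(\alpha)\cO_K=\cP_i$. Since the $\mathfrak m_i$ exhaust the maximal ideals of $\prod_j B_j$, the $\cP_i$ are exactly the (pairwise distinct) maximal ideals of $\cO_K$ containing $p$; moreover $\cO_K/\cP_i\cong\F_p[x]/(\bar g_i)$, so the residue degree is $f_i=\deg g_i$.

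It remains to pin down the exponents. As $\cO_K$ is Dedekind and only $\cP_1,\dots,\cP_n$ lie over $p$, write $p\cO_K=\prod_i\cP_i^{a_i}$ with $a_i\geq1$. Writing $f=\prod_j g_j^{\,e_j}+ph$ in $\Z[x]$ and evaluating at $\alpha$ gives $\prod_j g_j(\alpha)^{\,e_j}\in p\cO_K$; together with $\cP_i^{\,e_i}\subseteq p\cO_K+g_i(\alpha)^{\,e_i}\cO_K$ (expand in the generators $p,\;g_i(\alpha)$ of $\cP_i$), multiplying over $i$ yields $\prod_i\cP_i^{\,e_i}\subseteq p\cO_K$, hence $a_i\leq e_i$ for every $i$. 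On the other hand the fundamental identity gives $\sum_i a_i f_i=[K:\Q]=\deg f=\sum_i e_i\deg g_i=\sum_i e_i f_i$, so $\sum_i(e_i-a_i)f_i=0$ with nonnegative summands; therefore $a_i=e_i$ and $p\cO_K=\prod_i\cP_i^{\,e_i}$.

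The one genuinely delicate point is the localization step. The conclusion fails at primes dividing the conductor $[\cO_K:\Z[\alpha]]$ — such a prime can be unramified while $\bar f$ is a proper power — so bare unramifiedness does not suffice and one truly needs $p\nmid N_{K/\Q}(f'(\alpha))$; it is exactly the isomorphism $\cO_K/p\cO_K\cong\F_p[x]/(\bar f)$ that lets the mod-$p$ factorization of $f$ read off the true prime decomposition. Everything afterwards is the Chinese Remainder Theorem plus the identity $\sum e_if_i=[K:\Q]$, and is routine.
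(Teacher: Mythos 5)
Your proof is correct. The paper itself offers no proof of this statement: it is the classical Dedekind--Kummer theorem, and the authors simply appeal to ``the basic work of Kummer,'' so there is nothing in the text to compare your argument against step by step. What you give is the standard modern proof: $p\nmid N_{K/\Q}(f'(\alpha))$ forces $p\nmid[\cO_K:\Z[\alpha]]$, localization at $p$ identifies $\cO_K/p\cO_K$ with $\F_p[x]/(\bar f)$, the Chinese Remainder Theorem produces the maximal ideals $\cP_i=p\cO_K+g_i(\alpha)\cO_K$ together with their residue degrees $f_i=\deg g_i$, and the inclusion $\prod_i\cP_i^{e_i}\subseteq p\cO_K$ combined with $\sum_i e_if_i=[K:\Q]$ pins down the exponents. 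All of these steps check out. Your closing remark is also a genuine improvement on the paper's phrasing: the hypothesis that does the work is $p\nmid N_{K/\Q}(f'(\alpha))$ (equivalently $p\nmid\mathrm{disc}(f)$), not unramifiedness alone, since an unramified $p$ dividing the index $[\cO_K:\Z[\alpha]]$ (as in Dedekind's cubic example at $p=2$) defeats the recipe; the paper's surrounding discussion does gesture at the $N_{K/\Q}(f'(\alpha))$ condition, but the theorem's header ``for unramified $p$'' is, as you note, not the precise hypothesis.
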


\

3.3. Our analysis of decidability will be intertwined with our analysis of axioms for adele rings. The analysis of definability in \cite{DM-ad},\cite{DM-supp} works uniformly. We study the adele rings $\A_K$ as special cases of the restricted direct products of certain Henselian valued fields with respect to a uniform definition of their valuation rings. We interpret 
(uniformly for $\cC,~\cB$ as spelled out earlier) the whole formalism ring-theoretically. The index set $I$ of absolute values of $K$ is coded as the Boolean algebra of minimal idempotents, and the "stalks" are $R/(1-e)R$, which are assumed to be models of the theory of an appropriate class $\cC$ of Henselian valued fields. Note that if $\cC$ consists of the $\Q_p$ and $\R$ and $I$ is an infinite set, and $R$ is the restricted direct product (with respect to $\cO$ as usual), and $L_i$ are chosen from $\cC$, there will be many different elementary theories of restricted products. One way to get alternatives is to have more than idempotent $e$ where $R/(1-e)R$ has residue characteristic $p_e$. Another is to have no $e$ with having 
residue characteristic $p$.

For $\A_{\Q}$ there is exactly one stalk of residue characteristic $p$, with $p$ unramified, and with residue field $\F_p$, and one stalk that is $\R$. These are elementary conditions.

\begin{lemma}\label{lemma1} If $K$ is a number field that is not $\Q$, there is no stalk with residue field $\F_p$ which is the unique stalk of residue characteristic $p$, and $p$ is unramified.\end{lemma}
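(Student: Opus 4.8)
The plan is to derive the lemma directly from the fundamental identity $\sum_i e_if_i=[K:\Q]$ recorded just above, by contradiction. So I would assume $K\neq\Q$ and suppose, towards a contradiction, that there is a prime $p$ together with a stalk $S$ of $\A_K$ of residue characteristic $p$ such that $S$ is the \emph{unique} stalk of residue characteristic $p$, the residue field of $S$ is $\F_p$, and $p$ is unramified in $K$.

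First I would unwind each clause arithmetically. Under the identification of the stalks of $\A_K$ with the completions $K_v$ (as in 3.3, and as used throughout \cite{DM-ad},\cite{DM-supp}), the stalks of residue characteristic $p$ are exactly the fields $K_{\cP_1},\dots,K_{\cP_g}$ attached to the primes $\cP_1,\dots,\cP_g$ of $\cO_K$ lying over $p$, where $p\cO_K=\cP_1^{e_1}\cdots\cP_g^{e_g}$. Uniqueness of $S$ then forces $g=1$; write $\cP$ for this single prime. The residue field of $K_\cP$ is $\cO_K/\cP$, so "residue field $\F_p$" says $f(\cP/p)=1$, and "$p$ unramified" (equivalently, since $g=1$, $e(\cP/p)=1$) says $e(\cP/p)=1$. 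Now the fundamental identity gives $[K:\Q]=\sum_{i=1}^{g}e_if_i=e(\cP/p)f(\cP/p)=1$, whence $K=\Q$, contradicting our assumption. This proves the lemma.

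The only point requiring any care — and it is not a genuine obstacle — is to confirm that the model-theoretic notion of "stalk" coming from the generalized-product presentation of $\A_K$ (the rings $R/(1-e)R$ for minimal idempotents $e$, as in 3.3) really does coincide with the family of local fields $K_v$, so that "residue characteristic", "residue field" and "ramification" carry their usual arithmetic meaning; this identification is standard. One should also note that a stalk of residue characteristic a prime number $p$ is automatically non-archimedean, so the fundamental identity applies to it. If one prefers to avoid invoking the fundamental identity as a black box, the same contradiction is visible from the Kummer/Dedekind description of the splitting of $p\cO_K$ for unramified $p$ given in the Theorem above: there $f(x)$ reduces modulo $p$ to a product of distinct monic irreducibles, a single prime above $p$ with residue degree $1$ corresponds to a single linear factor, and this forces $\deg f(x)=[K:\Q]=1$.
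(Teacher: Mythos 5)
Your proof is correct and follows essentially the same route as the paper: both arguments rest on the fundamental identity $\sum_i e_if_i=[K:\Q]$, with uniqueness of the stalk giving $g=1$, the residue field condition giving $f=1$, and unramifiedness giving $e=1$, forcing $[K:\Q]=1$. The paper merely states the contrapositive (if $n>1$ then $g>1$ or some $f_i>1$), so there is no substantive difference.
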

\begin{proof} Fix $K$ and such a $p$. Use $\alpha\in \cO_K$ with $K=\Q(\alpha)$, and as before let $f$ be minimal (monic and over $\Z$) polynomial of $\alpha$ over $\cO$.  Let $n=[K:\Q]$. Let $\cP_1,\dots,\cP_g$ be the pries of $\cO_K$ above $p$. Let 
$e_i,f_i$ br the usual ramification index and residue degree. Since $p$ is unramified, all $e_i$ are $1$, so 
$$n=\sum_{i=1}^{g} f_i.$$
Bur if $n>1$, then either $g>1$ or $g=1$ and $f_i>1$ for some $i$.\end{proof}
\begin{cor} If $K\neq \Q$, then $\A_{\Q}$ and $\A_K$ are not elementarily equivalent.\end{cor}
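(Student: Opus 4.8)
The plan is to produce a single sentence $\sigma$ in the first-order language of rings that holds in $\A_{\Q}$ and fails in $\A_K$ whenever $K\neq\Q$; the failure in $\A_K$ will be exactly the content of Lemma~\ref{lemma1}.

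First I would recall, as in the discussion preceding Lemma~\ref{lemma1}, that the minimal idempotents $e$ of $R=\A_K$ are first-order definable (in the ring language) and correspond bijectively to the places $v$ of $K$, with the stalk $R/(1-e)R\cong eR$ isomorphic to the local field $K_v$. By the uniform first-order definition of the valuation rings $\cO_v$ of the $p$-adic fields (due to \cite{CDLM}), together with the generalized-product formalism of Section~2, the valuation ring of each stalk, its residue field, its residue characteristic, and the property of a given element being a uniformizer are all first-order definable inside $R$ in terms of the parameter $e$. Since $2=1+1$ is a closed term, it follows in particular that ``$eR$ is a non-archimedean local field of residue characteristic $2$'' (equivalently, $1+1$ is a non-unit of the ring $eR$), ``$eR$ has residue field with exactly two elements'', and ``$1+1$ generates the maximal ideal of $eR$'' are each expressible by ring-formulas $\varphi_1(e),\varphi_2(e),\varphi_3(e)$.

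Then I would take $\sigma$ to be the sentence asserting: there is a minimal idempotent $e$ satisfying $\varphi_1(e)\wedge\varphi_2(e)\wedge\varphi_3(e)$ and such that, for every minimal idempotent $e'\neq e$, $1+1$ is a unit of $e'R$ (so that the corresponding place $v$ is the only place of $K$ above $2$). Here $\varphi_2(e)\wedge\varphi_3(e)$ forces $f=e=1$ for $v$ over $2$, i.e. $K_v=\Q_2$; hence $\sigma$ says precisely that $2$ has a unique place above it in $K$, that this place is unramified, and that its residue field is $\F_2$. For $K=\Q$ this is witnessed by the idempotent supported at the $2$-adic place, so $\A_{\Q}\models\sigma$. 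On the other hand, Lemma~\ref{lemma1} applied with $p=2$ says that no number field $K\neq\Q$ admits such a prime, so $\A_K\models\neg\sigma$. Therefore $\A_{\Q}\not\equiv\A_K$, which is the assertion of the corollary. (Any fixed rational prime would serve equally well in place of $2$.)

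The one step that needs care --- but which is already secured by the machinery of Section~2, the analysis in the preceding subsection, and \cite{CDLM} --- is the passage from the ``stalk-wise'' statement about residue characteristic, residue field and ramification to an honest sentence in the \emph{pure ring language} of $R$: the Boolean bookkeeping of idempotents, the formation of the quotients $R/(1-e)R$, and the valuation on each such quotient must all be replaced by ring-formulas. Granting this, the proof is just the short verification above, the only genuinely new ingredient being Lemma~\ref{lemma1}.
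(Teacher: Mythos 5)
Your proof is correct and follows essentially the same route as the paper: both reduce the corollary to Lemma~\ref{lemma1} together with the observation that ``there is a unique stalk of residue characteristic $p$, it is unramified, and its residue field is $\F_p$'' is an elementary condition on the adele ring. The only (harmless, arguably cleaner) difference is that you fix $p=2$ once and for all --- which works because the three conjuncts force $g=e_1=f_1=1$ and hence $[K:\Q]=1$ even when $2$ ramifies in $K$ --- whereas the paper chooses $p$ unramified in $K$ to match the hypothesis of the lemma verbatim.
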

\begin{proof} Choose $p$ unramified in $K$ and use Lemma \ref{lemma1}.\end{proof}

3.4. This suggests a number of questions:

(A) How are the $\A_K$ divided into elementary equivalence classes?

(B) To $\A_K$ we attach the set $\{p,e_1,\dots,e_g,f_1\dots,f_g\}$ as above, together with

(i) number of real valuations

(ii) number of complex valuations.

Does this process give a complete set of elementary invariants of $\A_K$?

(C) What are the constraints on the above conjectural invariants?

These will be addressed in the rest of the paper. We point out now that if has been known for some time that 
$\A_{K_1} \cong \A_{K_2}$ does not imply $K_1\cong K_2$. We will consider what happens when we replace $\cong$ by
$\equiv$.

3.5. Let $R$ be any restricted direct product of finite extensions of the $\Q_p$, as well as $\R$ and $\C$, with respect to the 
(uniformly definable) unit balls. We analyze $R$ ring-theoretically using the apparatus of the Boolean algeba of idempotent (as in \cite{DM-ad}). Thus the minimal idempotents $e$ have attached "stalks" $R/(1-e)$, which come from the collection of 
Henselian fields abbove, and $\R$ and $\C$.

For each prime $p$ (in $\Z$) we let $R_p$ be $\{e: R/(1-e) \text{~has~residue~characteristic}~p\}$. Now we have in adele rings a uniform definition of 
finite idempotents (see \cite{DM-ad}), and in any $\A_K$ the set $R_p$ is finite.

Moreover, for each $\A_K$ the earlier work with $e_i,f_i$ gives a bound, independent of $p$, for $card(R_p)$. One bound is 
$[K;\Q]$ and the issue immediately arises of interpreting $[K;\Q]$ in $\A_K$. Of course, by decidability of $\A_{\Q}$ one can certainly not interpret $\Q$ (diagonally embedded) in $\A_{\Q}$ (and same for an algebraic number field $K$ using work of Julia Robinson \cite{jrobinson} showing undecidability of $\cO_K$.

Let $R=\A_K$. How do we detect $[K;\Q]$ inside $R$? We use the concept $p$ splits completely in $K$ meaning that $p$ is unramified and all $f_i$ are $1$.

For such $p$, all $\cP_i$ have residue field $\F_p$ (i.e. by the Kummer argument, $f$ splits completely in $\F_p$). Now consider $L \supseteq K$, the normal closure of $K$. By Chebotarev's density Theorem infinitely many $p$ split 
completely in $L$ (much more is true and we will use that later). Let $g\in \Z[x]$ be the minimum polynomial 
for some $\beta \in \cO_L$, $L=\cO(\beta)$. Chose $p$ splitting completely. Then $g$ factors 
completely in $\F_p$. It follows that each $f_i$ is $1$. Thus $p$ splits completely in $K$ Thus $g=n=[K:\Q]$ ($p\cO_K=\prod_i \cP_i^{e_i}$ as usual).

So we define $[K:\Q]$ by taking any $p$ splitting completely, and then $n$ is the number of minimal idempotents $e$ 
with $R/(1-e)R$  having all residue fields $\F_p$ and $v(p)=1$. 

So the definition works for all but finitely many $p$.

\begin{cor} $\A_{K_1}\equiv \A_{K_2}$ implies that $[K_1:\Q]=[K_2:\Q]$.\end{cor}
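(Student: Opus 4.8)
The plan is to exploit the definability result just established: in any $\A_K$, once we fix a prime $p$ that splits completely in $K$, the integer $[K:\Q]$ equals the number of minimal idempotents $e$ of $\A_K$ with $\A_K/(1-e)\A_K$ having all residue fields $\F_p$ and $v(p)=1$. The subtlety is that this counting recipe is attached to a \emph{particular} prime $p$, and "$p$" is not a first-order object of the ring $\A_K$; what is first-order is the \emph{residue characteristic} of a stalk. So the first step is to reformulate the recipe in a way that quantifies over stalks internally: for a natural number $m$, consider the $L^{Boole+}$-sentence $\varphi_m$ asserting "there exists a prime-characteristic value $c$ such that $p$-stalks for that characteristic are all split completely (unramified, residue field the prime field) and there are exactly $m$ of them". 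Using the Boolean-algebra-of-idempotents interpretation of $\A_K$ from \cite{DM-ad}, together with the uniform definability of valuation rings and of finite idempotents, each $\varphi_m$ is expressible by a single first-order sentence in the ring language of $\A_K$.

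Second, I would verify that $\A_K \models \varphi_{[K:\Q]}$ and $\A_K \models \neg\varphi_m$ for every $m \neq [K:\Q]$. The positive direction is immediate from the discussion preceding the corollary: by Chebotarev applied to the normal closure $L$ of $K$, infinitely many $p$ split completely in $L$, hence in $K$, and for any such $p$ the Kummer factorization forces $g = n = [K:\Q]$ primes above $p$, each with residue field $\F_p$; the corresponding minimal idempotents witness $\varphi_n$. For the negative direction one must observe that whenever some residue characteristic $c$ has all its stalks split completely with residue field the prime field, then $c$ is a rational prime splitting completely in $K$, so the number of such stalks is exactly $\sum_i f_i = [K:\Q]$ by the degree formula $\sum_i e_i f_i = [K:\Q]$ with all $e_i = f_i = 1$; thus $m$ can only be $[K:\Q]$.

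Third, the corollary follows formally: if $\A_{K_1} \equiv \A_{K_2}$, then for each $m$ the two rings agree on the truth of $\varphi_m$; since $\A_{K_1} \models \varphi_{[K_1:\Q]}$ we get $\A_{K_2} \models \varphi_{[K_1:\Q]}$, and by the uniqueness established in the second step applied to $K_2$ we conclude $[K_2:\Q] = [K_1:\Q]$.

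The main obstacle I anticipate is not the number theory — that is entirely supplied by Kummer, Chebotarev and the degree formula already in the excerpt — but the care needed in step one to confirm that "$p$ splits completely in $K$, with exactly $m$ stalks of residue field the prime field above the chosen residue characteristic" really is captured by a \emph{single} first-order ring sentence. This requires invoking, in a uniform way, the interpretation of the idempotent Boolean algebra, the predicate $Fin$ for finite idempotents, the uniform definability of the unit balls/valuation rings across the $\Q_p$ (so that "unramified" and "$v(p)=1$" make sense internally), and the uniform definability of "residue field equals the prime field" among finite extensions of $\Q_p$. Each of these ingredients is available from \cite{DM-ad}, \cite{DM-supp} and \cite{CDLM}, so the work is one of assembly rather than of proving something new; but the assembly must be spelled out to make the elementary-equivalence argument airtight.
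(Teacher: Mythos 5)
Your number-theoretic core is exactly the paper's: a prime $p$ splitting completely in $K$ exists by Chebotarev, and for any such $p$ the degree formula $\sum_i e_if_i=[K:\Q]$ forces the number of primes above $p$ to be $[K:\Q]$, so counting the stalks isomorphic to $\Q_p$ recovers the degree. The gap is in your first step. Your sentence $\varphi_m$ existentially quantifies over ``a prime-characteristic value $c$'' and then speaks of ``all stalks of that characteristic'', of ``residue field equal to the prime field'', and of ``$v(p)=1$'' \emph{uniformly in the unspecified characteristic}. None of these is known to be first-order in the ring language of $\A_K$: written out naively, each refers either to the element $p\cdot 1$ of the diagonally embedded $\Z$, or to the binary relation ``the stalks at $e$ and $e'$ have the same residue characteristic'', or to ``every residue class contains an additive multiple of $1$''. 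The diagonal $\Z\cdot 1$ cannot be definable in $\A_{\Q}$ (that would interpret arithmetic and contradict the decidability of $\A_{\Q}$ --- the paper makes precisely this point about the non-interpretability of $\Q$), and the uniform definability of valuation rings from \cite{CDLM} gives you $\cO$, its maximal ideal and its uniformizers in each stalk, but nothing that ties together the residue characteristics of two \emph{different} stalks. So this is not ``mere assembly''; as stated, $\varphi_m$ may simply fail to be a sentence of the language, and your second and third steps have nothing to transfer.

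The repair is to de-uniformize, which is what the paper's preceding discussion implicitly does. For each \emph{fixed} prime $p$ (occurring as a numeral) and each fixed $m$, the statement $\sigma_{p,m}$ --- ``there are exactly $m$ minimal idempotents $e$ such that $p\cdot e$ generates the maximal ideal of the stalk $e\A_K$ and the residue ring has exactly $p$ elements, and every minimal idempotent whose stalk makes $p\cdot e$ a nonzero non-unit is one of these'' --- \emph{is} a single first-order ring sentence, since $p$ and $m$ are concrete integers and the valuation ring and its maximal ideal are uniformly definable. Now choose by Chebotarev a specific $p$ splitting completely in $K_1$; then $\A_{K_1}\models\sigma_{p,n_1}$ with $n_1=[K_1:\Q]$. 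Elementary equivalence transfers this one sentence to $\A_{K_2}$, which says that $p$ splits completely in $K_2$ with exactly $n_1$ primes above it, whence $n_1=\sum_i e_if_i=[K_2:\Q]$ by your own uniqueness observation. No internal quantification over residue characteristics is needed.
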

\begin{proof} Done\end{proof}
Of course the converse fails, e.g. for $K_1=\Q(\sqrt{2}),~ K_2=\Q(\sqrt{3})$ since
$\sqrt{2} \notin \Q(\sqrt{3})$.

\

3.6. {\it Decomposition Patterns}. Here we follow Perlis \cite{perlis}. $p$ is not assumed unramified in $K$. The splitting type of 
$p$ in $K$ is a sequence $\Sigma_{p,K}=(f_1,\dots,f_g)$, where $f_1\leq \dots \leq f_g$, so that 
$p\cO_K=\cP_1\dots \cP_g$ ($\cP$'s distinct) and $f_j$ is the residue degree of $\cP_j$. Note that there can be 
repetitions. Note that the $e_i$ are not there. Clearly if $\A_K\equiv \A_L$, then for each $p$, 
$\Sigma_{p,K_1}=\Sigma_{p,K_2}$ (cf. previous discussion).

Define, for a splitting type $A$,
$$P_K(A)=\{p: \Sigma_{p,K}=A\}.$$
Clearly since $\sum_j f_j \leq [K:\Q]$, we have $P_K(A)=\emptyset$ for all but finitely many $A$. Now Perlis' splended 
Theorem 1 in \cite{perlis} gives immediately
\begin{thm}\label{theorem1} If $K_1$ and $K_2$ are arithmetically equivalent, then \begin{itemize}
\item $\zeta_{K_1}(s)=\zeta_{K_2}(s)$,
\item $K_1$ and $K_2$ have the same discriminant, 
\item $K_1$ and $K_2$ have the same discriminant,
\item $K_1$ and $K_2$ have the same number of real (resp. complex) absolte values,
\item $K_1$ and $K_2$ have the same normal closure,
\item The unit groups of $K_1$ and $K_2$.
\end{itemize} \end{thm}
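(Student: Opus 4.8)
The plan is to derive Theorem~\ref{theorem1} from Perlis' Theorem~1 in three moves: an elementary treatment of the zeta-function clause, a translation of arithmetic equivalence into a group-theoretic condition (Gassmann equivalence), and then a short harvest of the remaining clauses. Throughout, ``$K_1$ and $K_2$ arithmetically equivalent'' means $\Sigma_{p,K_1}=\Sigma_{p,K_2}$ for every prime $p$ --- equivalently, by Perlis, for all but finitely many $p$. The zeta-function clause is immediate: in the Euler product $\zeta_K(s)=\prod_{\mathfrak p}(1-N\mathfrak p^{-s})^{-1}$ the contribution of the primes above a fixed $p$ is $\prod_{j=1}^{g}(1-p^{-f_j s})^{-1}$, which depends only on the multiset $\{f_1,\dots,f_g\}=\Sigma_{p,K}$ --- the ramification indices $e_j$ never enter, since $N\mathfrak p_j=p^{f_j}$ regardless, and only finitely many $p$ ramify. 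Hence $\zeta_{K_1}=\zeta_{K_2}$. The same remark yields $[K_1:\Q]=[K_2:\Q]$ at no cost: choose (via Chebotarev) a $p$ splitting completely in the compositum $K_1K_2$; then the common degree is just the length of $\Sigma_{p,K_1}=\Sigma_{p,K_2}$.

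For the other clauses I would pass to group theory. Let $N/\Q$ be a finite Galois extension containing $K_1K_2$ --- the compositum of the two normal closures will serve --- and set $G=\Gal(N/\Q)$, $H_i=\Gal(N/K_i)$. For $p$ unramified in $N$, the splitting type $\Sigma_{p,K_i}$ is precisely the cycle type of a Frobenius at $p$, acting by left translation on the coset space $G/H_i$; and by Chebotarev the Frobenii exhaust the conjugacy classes of $G$. Comparing cycle types --- equivalently, fixed-point counts of group elements --- then shows that ``$\Sigma_{p,K_1}=\Sigma_{p,K_2}$ for almost all $p$'' is equivalent to $|C\cap H_1|=|C\cap H_2|$ for every conjugacy class $C$ of $G$, i.e.\ to $H_1$ and $H_2$ being Gassmann equivalent in $G$. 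This equivalence is the substance of Perlis' Theorem~1, and the step I expect to be the main obstacle: it is the only place where real machinery (Chebotarev density, the correspondence between prime splitting and Frobenius conjugacy classes, and upgrading ``almost all'' to ``all'' primes) is needed; the rest is bookkeeping.

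Granting Gassmann equivalence, the normal-closure clause drops out: the Galois closure of $K_i$ is the fixed field of the normal core $\bigcap_{g\in G}gH_ig^{-1}$, and $\sigma$ lies in this core iff its whole conjugacy class $C(\sigma)$ is contained in $H_i$, i.e.\ iff $|C(\sigma)\cap H_i|=|C(\sigma)|$ --- a condition blind to $i$. So the two cores agree, and $K_1,K_2$ have the same normal closure. Next, a root of unity $\zeta_m$ lies in $K_i$ iff $H_i\subseteq M:=\Gal(N/\Q(\zeta_m))$ (which is meaningful since $\zeta_m\in K_i$ forces $\Q(\zeta_m)\subseteq N$); as $G/M$ is abelian, each coset $gM$ is stable under conjugation and hence a union of conjugacy classes, so Gassmann equivalence gives $|H_1\cap gM|=|H_2\cap gM|$ for every $g$ and in particular $H_1\subseteq M\iff H_2\subseteq M$. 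Thus $K_1$ and $K_2$ contain exactly the same roots of unity; combined with the equality of signatures (next paragraph) and Dirichlet's unit theorem $\mathcal O_{K_i}^{\times}\cong\mu_{K_i}\times\Z^{r_1^{(i)}+r_2^{(i)}-1}$, this gives $\mathcal O_{K_1}^{\times}\cong\mathcal O_{K_2}^{\times}$.

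Finally the signature and discriminant are extracted from $\zeta_{K_1}=\zeta_{K_2}$ by classical analysis: the completed function $\Lambda_K(s)=|d_K|^{s/2}\,\Gamma_{\mathbb R}(s)^{r_1}\Gamma_{\mathbb C}(s)^{r_2}\,\zeta_K(s)$ obeys $\Lambda_K(s)=\Lambda_K(1-s)$, so comparing the functional equations for $K_1$ and $K_2$ (whose zeta functions are equal) forces $|d_{K_1}/d_{K_2}|^{s/2}$ times the ratio of $\Gamma$-factors to be invariant under $s\mapsto 1-s$; since the $\Gamma$-factors have poles but no zeros and the discriminant ratio is a nonvanishing exponential, tracking these poles --- and using that the common degree fixes $r_1+2r_2$ --- forces $r_1^{(1)}=r_1^{(2)}$ and $r_2^{(1)}=r_2^{(2)}$ (i.e.\ $K_1,K_2$ have equally many real and equally many complex absolute values), hence $|d_{K_1}|=|d_{K_2}|$, while $\mathrm{sign}(d_K)=(-1)^{r_2}$ settles the sign. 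This is the whole plan; the only genuine work is the middle step, and it is exactly Perlis' theorem.
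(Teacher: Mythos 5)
Your proposal is correct, but it takes a more self-contained route than the paper, whose entire proof is the one-line observation that $P_{K_1}(A)=P_{K_2}(A)$ for all splitting types $A$ and a citation of Perlis' Theorem 1, which already bundles every bullet point of the statement as part of its conclusion. You instead reconstruct the derivations: the Euler-product computation showing $\zeta_K$ sees only the residue degrees $f_j$ (so the zeta clause and the equality of degrees are elementary once splitting types agree at \emph{all} primes), the translation into Gassmann equivalence of $H_1,H_2$ in $G=\Gal(N/\Q)$ via Frobenius cycle types and Chebotarev, the normal-core and roots-of-unity arguments from the conjugacy-class counts $|C\cap H_i|$, and the extraction of the signature $(r_1,r_2)$ and $|d_K|$ from the functional equation of $\Lambda_K$ (with Brill's theorem $\mathrm{sign}(d_K)=(-1)^{r_2}$ fixing the sign, and Dirichlet's unit theorem finishing the unit groups). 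All of these steps are sound, and you are candid that the one piece of genuine machinery --- the equivalence of ``same splitting types for almost all $p$'' with Gassmann equivalence, including the upgrade from almost all to all $p$ --- is exactly Perlis' theorem, which you do not reprove. So both arguments ultimately rest on the same external result; the paper uses it as a black box delivering all the conclusions at once, while your version isolates the group-theoretic core and shows how each clause falls out of it, which is more informative but proves nothing the citation does not already cover. (Incidentally, you correctly read through the typos in the statement: the duplicated discriminant bullet and the truncated ``unit groups'' clause, which should assert that $\mathcal{O}_{K_1}^{\times}\cong\mathcal{O}_{K_2}^{\times}$.)
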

\begin{proof} Immediate by Perlis' result \cite[Theorem 1,pp. 345]{perlis} since $P_{K_1}(A)=P_{K_2}(A)$ for all $A$.\end{proof}

\begin{cor} For any given number field $K$, there are only finitely many number fields $L$ such that are $K$ and $L$ have elementarily equivalent adele rings, more generally, finitely many $L$ which are arithmetically equivalent to $K$.\end{cor}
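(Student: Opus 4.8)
The plan is to reduce the first assertion to the second, and then to derive finiteness from the fact that arithmetically equivalent fields share a common normal closure.

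First I would note that the statement about elementary equivalence is a special case of the statement about arithmetic equivalence. Indeed, by the discussion in 3.6, if $\A_{K}\equiv\A_{L}$ then the local splitting types agree, $\Sigma_{p,K}=\Sigma_{p,L}$ for every rational prime $p$, and this is by definition the assertion that $K$ and $L$ are arithmetically equivalent. (This reduction is the one point that deserves care: it rests on the fact, set up in the preceding sections, that $\Sigma_{p,K}$ is encoded in elementary data of $\A_K$ via the residue characteristics, residue field degrees, and ramification of the stalks.) So it is enough to bound the number of number fields $L$ arithmetically equivalent to a fixed $K$.

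Now fix $K$ and let $N$ denote its normal closure over $\Q$, a finite Galois extension. By Theorem~\ref{theorem1}, any $L$ arithmetically equivalent to $K$ has the same normal closure, namely $N$; in particular $L\subseteq N$. Since $\Gal(N/\Q)$ is a finite group it has only finitely many subgroups, so $N$ has only finitely many subfields, and a fortiori only finitely many of these can be arithmetically equivalent to $K$. Combining with the reduction above, there are in particular only finitely many $L$ with $\A_K\equiv\A_L$, which is the Corollary.

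I do not expect any genuine obstacle once Theorem~\ref{theorem1} is available; the proof is essentially the observation that a fixed finite Galois extension has finitely many subfields. An alternative route, bypassing the ``same normal closure'' clause of Theorem~\ref{theorem1}, would invoke instead that arithmetically equivalent fields have the same degree $n=[K:\Q]$ and the same discriminant $d$, and then apply the Hermite--Minkowski finiteness theorem, which asserts that there are only finitely many number fields of given degree and given discriminant.
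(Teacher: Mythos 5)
Your proposal is correct, and the reduction of the first assertion to the second (via the agreement of splitting types read off from the stalks of the adele ring) is exactly the reduction the paper intends. Where you diverge is in how you derive finiteness from arithmetic equivalence: the paper's proof invokes the discriminant clause of Theorem~\ref{theorem1} together with Hermite's theorem (only finitely many number fields of given degree and discriminant), i.e.\ precisely the ``alternative route'' you mention in your last sentence. Your primary argument instead uses the ``same normal closure'' clause: every $L$ arithmetically equivalent to $K$ sits inside the fixed finite Galois extension $N$, and $N$ has only finitely many subfields since $\Gal(N/\Q)$ has only finitely many subgroups. Both arguments are valid given Theorem~\ref{theorem1}. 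Yours trades the analytic/geometric input of Hermite--Minkowski for pure Galois theory and yields a cleaner explicit bound (the number of subgroups of $\Gal(N/\Q)$); on the other hand, the normal-closure clause of Theorem~\ref{theorem1} is itself one of the deeper parts of Perlis's work (resting on the Gassmann-equivalence characterization), whereas the degree and discriminant clauses are comparatively direct consequences of the equality of zeta functions. So neither route is strictly more elementary, but your argument is a genuine and correct alternative to the one in the paper.
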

\begin{proof} Use Theorem \ref{theorem1} (ii) and Hermite's Theorem.\end{proof}

{\bf Note:} It may seem strange that the $e_i$ are not used. The same is true in Perlis \cite{perlis} up to the proof of his Theorem 1. But in fact the $e_i$ are at most $log_2[K:Q]$, and we also know discriminant, so we have the ramified primes.
But it can happen that the ramification degrees may not match up for arithmetically equivalent (i.e. same zeta) $K_1$ and $K_2$.

\

3.7. {\it Elementary equivalence and isomorphism of adele rings}. In this section we show that elementary equivalence of adele rings implies isomorphism. Given a number field $K$, denote the set of its non-archimedean valuations by $V_K^f$.

%Perlis \cite{}, page 352, quotes Iwasawa \cite{} that $\A_{K_1} \cong \A_{K_2}$ if and only if one can pair the primes of %%$K_1$ and $K_2$ so that the completions are isomorphic.

\begin{prop}\label{eq} Let $K$ and $L$ be number fields. The following are equivalent.
\begin{itemize}
\item $K$ and $L$ are arithmetically equivalent, i.e. for all but finitely many $p$, the decomposition types of $p$ in $K$ and $L$ coincide.
\item $K$ and $L$ have the same zeta function.
\item There is a bijection $\phi: V_{K}^f \rightarrow V_{L}^f$ such that the local fields $K_{\frak p}$ and 
$L_{\phi(\frak p)}$ are isomorphic for all but finitely many primes $\frak p \in V_{K}^f$.
\end{itemize}
\end{prop}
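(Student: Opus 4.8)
The plan is to establish the cycle of implications among the three conditions, most of which are already essentially available from the classical theory of arithmetically equivalent fields (Perlis) together with the Kummer/local-global dictionary recalled in Section 3.

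\medskip

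\textbf{First I would prove that the first two conditions are equivalent.} This is classical: the splitting type $\Sigma_{p,K}=(f_1,\dots,f_g)$ determines, and is determined by, the local Euler factor of $\zeta_K$ at $p$, namely
\[
\prod_{j=1}^{g}\bigl(1-p^{-f_j s}\bigr)^{-1},
\]
at least for $p$ unramified, and the finitely many ramified primes contribute only finitely many exceptional Euler factors. Since a Dirichlet series with an Euler product is determined by all but finitely many of its local factors (two such series agreeing outside a finite set of primes have a ratio which is an entire function of finite order that is a finite product of local factors, forcing equality of the remaining ones via the discriminant bound $\sum_j e_j f_j=[K:\Q]$ — or, more cleanly, one invokes Perlis' Theorem 1 directly, which gives that arithmetic equivalence forces equality of discriminants and hence of ramified primes, after which the remaining Euler factors must also agree). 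Thus ``same decomposition type for all but finitely many $p$'' is equivalent to ``same zeta function''. This direction is not the obstacle.

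\medskip

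\textbf{Next I would handle the equivalence of the third condition with the others.} For the implication from arithmetic equivalence to the existence of $\phi$: for each rational prime $p$, the multiset of completions $\{K_{\mathfrak p}:\mathfrak p\mid p\}$ is, by the theory of completions, determined up to isomorphism by the multiset of pairs $(e_i,f_i)$ together with the ramification data — but for $p$ unramified the completion $K_{\mathfrak p}$ is simply the unramified extension of $\Q_p$ of degree $f_i$, which is unique, so the splitting type $\Sigma_{p,K}$ determines the multiset $\{K_{\mathfrak p}:\mathfrak p\mid p\}$ exactly. Hence if $\Sigma_{p,K}=\Sigma_{p,L}$ for all but finitely many $p$, one picks for each such $p$ any bijection of the primes above $p$ matching the $f_i$, and for the finitely many exceptional $p$ one simply picks an arbitrary bijection of the (finite, equal-cardinality by the $n=[K:\Q]=[L:\Q]$ constraint which itself follows from equal zeta) sets of primes above $p$; glueing over all $p$ gives the global bijection $\phi:V_K^f\to V_L^f$ with $K_{\mathfrak p}\cong L_{\phi(\mathfrak p)}$ for all but finitely many $\mathfrak p$. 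Conversely, if such a $\phi$ exists, then for all but finitely many $p$ the local fields match, so the local $f_i$'s match, so $\Sigma_{p,K}=\Sigma_{p,L}$ for all but finitely many $p$, which is arithmetic equivalence.

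\medskip

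\textbf{The main obstacle} I expect is not any single implication but the bookkeeping of exceptional primes: one must be careful that the finite exceptional sets occurring in the three conditions (the ramified primes, the primes dividing $N_{K/\Q}(f'(\alpha))$, and the primes where the purported $\phi$ fails) can all be absorbed into a single finite set without losing control — in particular one needs that $[K:\Q]=[L:\Q]$ to guarantee that $V_K^f$ and $V_L^f$ admit a bijection at all, and this equality must be derived from the hypothesis (it follows from equal zeta functions by comparing, say, the order of the pole at $s=1$ together with the residue formula, or more elementarily from the splitting density of completely split primes as in 3.5). Once the degree equality is in hand, matching the finitely many leftover primes is trivial since both fibres over each such $p$ are finite of equal size. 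The remainder is routine, and I would present it as: arithmetic equivalence $\Leftrightarrow$ equal zeta (classical, via Perlis' Theorem \ref{theorem1} and the discriminant), and arithmetic equivalence $\Leftrightarrow$ existence of $\phi$ (via uniqueness of unramified local extensions plus the degree count).
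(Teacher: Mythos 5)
Your overall route coincides with the paper's: the paper disposes of this proposition by citing Klingen (III.1 and VI.2), which packages exactly the two ingredients you use, namely Perlis's Theorem 1 for the equivalence of arithmetic equivalence with equality of zeta functions, and the uniqueness of the unramified extension of $\Q_p$ of each degree for the dictionary with the third condition. So there is no divergence of method, only of the level of detail you supply.

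There is, however, one step in your argument that fails as written. In constructing the total bijection $\phi: V_{K}^f \rightarrow V_{L}^f$ from the matched unramified fibres, you dispose of the finitely many exceptional rational primes by asserting that the fibres above each such $p$ have equal cardinality ``by the $n=[K:\Q]=[L:\Q]$ constraint''. That is false: equality of degrees gives $\sum_i e_i f_i = n$ in both fields but says nothing about the number $g$ of primes above a fixed $p$ (for instance $\Q(i)$ and $\Q(\sqrt{2})$ have the same degree, yet $5$ splits in the first and is inert in the second). The cardinalities genuinely matter here: since each isomorphism class of completion occurs only finitely often (at most $[K:\Q]$ times, all above one rational prime), any bijection realizing the third condition must, outside a finite set, respect the fibration over rational primes, and hence the two finite leftover sets are forced to be equinumerous; if they were not, no admissible $\phi$ would exist at all, so the point cannot be waved away. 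The repair is to invoke the full strength of Perlis's theorem, which you already cite for the first equivalence: agreement of splitting types for all but finitely many $p$ upgrades, via the Gassmann--Chebotarev characterization, to agreement at \emph{every} $p$, whence $g_K(p)=g_L(p)$ for every $p$ and the exceptional fibres can be bijected prime by prime. With that substitution your argument is complete. I would also drop your first, purely analytic sketch of ``same splitting types almost everywhere implies same zeta'' — two Euler products agreeing outside a finite set of primes need not be equal in general — and keep only the appeal to Perlis, which is what the paper's citation to Klingen amounts to.
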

\begin{proof} These follows from III.1 and VI.2 in the book \cite{klingen}.
%{\it Arithmetical Similarities}, by Norbert Klingen, OUP 1998.
\end{proof}

The crucial tool used in the proof is the following result of K. Iwasawa.

\begin{thm}\label{iwasawa} $\A_K$ and $\A_L$ are isomorphic if and only there there is a bijection 
$\phi: V_{K}^f \rightarrow V_{L}^f$ such that the local fields $K_{\frak p}$ and 
$L_{\phi(\frak p)}$ are isomorphic for all primes $\frak p \in V_{K}^f$ if and only if the finite adeles 
$\A_K^{fin}$ and $\A_L^{fin}$ are isomorphic.\end{thm}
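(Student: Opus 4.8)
\medskip
\noindent\textbf{Proof strategy.}
Denote the three conditions by (1) $\A_K\cong\A_L$, (2) the existence of a bijection $\phi\colon V_K^f\to V_L^f$ with $K_{\mathfrak p}\cong L_{\phi(\mathfrak p)}$ for all $\mathfrak p\in V_K^f$, and (3) $\A_K^{fin}\cong\A_L^{fin}$. The plan is to prove $(1)\Rightarrow(2)$, $(3)\Rightarrow(2)$, $(2)\Rightarrow(3)$ and $(2)\Rightarrow(1)$; the first two rest on a ring-theoretic reconstruction of the local fields from the adele ring, the last two on reassembling the local isomorphisms into a global one. Throughout, write $V_K=V_K^f\cup V_K^\infty$ and $K_\infty=\prod_{v\in V_K^\infty}K_v$, so that $\A_K=\A_K^{fin}\times K_\infty$.

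The key observation is that for any restricted product $R=\prod'_{v\in I}K_v$ of fields with respect to subrings $\mathcal{O}_v$, the idempotents of $R$ are exactly the $\{0,1\}$-valued tuples, all of which lie in $R$; hence $\mathrm{Idem}(R)=\cP(I)$ as Boolean algebras, its atoms are the singletons, and for an atom $e_v$ the $v$-th coordinate projection is a ring isomorphism $e_vR\cong K_v$. Therefore a ring isomorphism $\sigma\colon\A_K\to\A_L$ (resp.\ $\A_K^{fin}\to\A_L^{fin}$) carries idempotents isomorphically, hence atoms to atoms, and so induces a bijection $\phi$ of place sets together with ring isomorphisms $K_v\cong L_{\phi(v)}$ for every $v$. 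In the adelic case $\phi$ maps $V_K^f$ onto $V_L^f$ because the three kinds of stalk are distinguishable as abstract rings: $\C$ is algebraically closed, $\R$ is formally real, and a non-archimedean completion is neither --- not algebraically closed, and not formally real since $-1$ is already a sum of squares in its finite residue field and this lifts by Hensel's lemma. This gives $(1)\Rightarrow(2)$, and $(3)\Rightarrow(2)$ is the same argument with no archimedean places to split off.

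Conversely, given $\phi$ and field isomorphisms $\theta_{\mathfrak p}\colon K_{\mathfrak p}\to L_{\phi(\mathfrak p)}$ as in (2), I would invoke the uniform first-order definability of the valuation rings of the local fields from \cite{CDLM}: since $\mathcal{O}_{K_{\mathfrak p}}$ and $\mathcal{O}_{L_{\phi(\mathfrak p)}}$ are cut out by one and the same ring formula, each $\theta_{\mathfrak p}$ maps $\mathcal{O}_{K_{\mathfrak p}}$ onto $\mathcal{O}_{L_{\phi(\mathfrak p)}}$, so the coordinatewise map $(\theta_{\mathfrak p})_{\mathfrak p}$ restricts to a ring isomorphism of the restricted products $\A_K^{fin}\to\A_L^{fin}$, proving $(2)\Rightarrow(3)$. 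For $(2)\Rightarrow(1)$ one must also match $K_\infty$ with $L_\infty$: condition (2) implies in particular the third condition of Proposition \ref{eq}, so $K$ and $L$ are arithmetically equivalent, and then Theorem \ref{theorem1} gives them the same number of real and of complex places, whence $K_\infty\cong L_\infty$ and $\A_K=\A_K^{fin}\times K_\infty\cong\A_L^{fin}\times L_\infty=\A_L$. Together these close the cycle and yield all three equivalences.

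The step I expect to be the crux is the reconstruction, and two features of it merit care. First, an abstract ring isomorphism of two non-archimedean local fields must automatically preserve all the valuation-theoretic structure; this is precisely what the uniform definability of \cite{CDLM} buys, and it also pins down equal residue characteristics --- hence compatibility of $\phi$ with the rational prime below --- which is what is needed when (2) is to be translated into statements about decomposition types. Second, the implication $(2)\Rightarrow(1)$ genuinely consumes Perlis' theorem through the equality of signatures; without it one keeps only $(2)\Rightarrow(3)$, which already delivers the last equivalence of the statement. The remaining points --- that $\mathrm{Idem}(\A_K)$ is the full power set of $V_K$, that atom-localizations recover the completions $K_v$, and that an isomorphism of power-set Boolean algebras is induced by a bijection of atoms --- are routine.
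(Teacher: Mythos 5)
Your proof is essentially correct, but it takes a completely different route from the paper: the paper gives no argument at all for this theorem, simply citing Iwasawa's original paper \cite{iwasawa} (pp.\ 331--356) for the result. What you have done is reconstruct a proof from the ring-theoretic apparatus the paper uses elsewhere (Section 3.5): the idempotents of a restricted product of fields form the full power set of the index set, atoms correspond to places, atom-localizations recover the completions, and the three kinds of stalk ($\C$, $\R$, non-archimedean) are ring-theoretically distinguishable. This buys self-containedness and makes explicit exactly the mechanism the paper relies on throughout, at the cost of two inputs Iwasawa's treatment does not need: (i) your step $(2)\Rightarrow(1)$ routes the matching of archimedean places through Proposition \ref{eq} and Theorem \ref{theorem1}, i.e.\ through Perlis and the fact that arithmetically equivalent fields have the same signature --- legitimate given what the paper has already established, but a heavier hammer than necessary; and (ii) the appeal to \cite{CDLM} in $(2)\Rightarrow(3)$ is overkill, since any abstract field isomorphism of $p$-adic fields preserves the valuation ring for elementary reasons. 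One small imprecision: your claim that $-1$ being a sum of squares in the residue field ``lifts by Hensel's lemma'' does not work as stated at residue characteristic $2$ (Hensel does not lift $-1\equiv 1^2 \bmod 2$ to a square in $\Q_2$); the correct statement is that every finite extension of $\Q_p$ has finite level (e.g.\ $-1=(\sqrt{-7})^2+2^2+1^2+1^2$ in $\Q_2$), which still yields that non-archimedean completions are not formally real, so your trichotomy of stalks survives.
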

\begin{proof} This is proved in \cite{iwasawa} on pages 331-356.\end{proof}

We note that isomorphism of $K$ and $L$ implies that of $\A_K$ and $\A_L$ which inturn implies $\zeta_K=\zeta_L$. 
In general, these implications can not be reversed, but they can be if $L$ or $K$ is Galois over $\Q$.

We shall also need the following.

\begin{prop} Let $K$ and $L$ be finite extensions of $\Q_p$. Suppse $K$ has ramification index $e$ and residue degree $f$. 
Suppose that $s>(p/p-1 + v_p(e)e)$. Then if $\mathcal{O}_K/\pi^s \mathcal{O}_K$ and 
$\mathcal{O}_L/\pi^s \mathcal{O}_L$ are isomorphic, then $K$ and $L$ are isomorphic.\end{prop}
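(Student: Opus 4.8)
The plan is to recover the invariants $p,f,e,s$ of $K$ from the finite ring $R:=\mathcal{O}_K/\pi^{s}\mathcal{O}_K$, to reduce the isomorphism problem to a comparison of Eisenstein polynomials over the common maximal unramified subfield, and to conclude by a quantitative form of Krasner's lemma. First I would read the invariants off ring-theoretically: $p$ is the residue characteristic of the local ring $R$, the number $q=p^{f}$ is the cardinality of the residue field $R/\mathrm{rad}(R)$, and then $s=\log_{q}|R|$. From the hypothesis on $s$ one checks $s>e$ (otherwise $R\isom\F_q[x]/(x^{s})$ forgets $e$ entirely, and the statement would be false), so $pR\neq0$ and $|pR|=q^{s-e}$, whence $e=s-\log_{q}|pR|$ is recovered as well. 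Consequently $\mathcal{O}_K/\pi_K^{s}\isom\mathcal{O}_L/\pi_L^{s}$ forces $e_K=e_L=:e$, $f_K=f_L=:f$ and $s_K=s_L=:s$, so $[K:\Q_p]=[L:\Q_p]=ef$ and $K,L$ have, up to isomorphism, the same maximal unramified subextension $F$ (the unramified extension of $\Q_p$ of degree $f$).

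Next I would locate a canonical copy of $\mathcal{O}_F/p^{m}$, with $m=\lceil s/e\rceil$, inside $R$: the subring $R_{0}$ generated by the solutions of $x^{q}=x$. In the Artinian local ring $R$ these solutions are exactly the reductions of the Teichm\"uller representatives, which all lie in $\mathcal{O}_F$, and since $\mathcal{O}_F=\Z_p[\zeta_{q-1}]$ with $\zeta_{q-1}$ among them, $R_{0}\isom\mathcal{O}_F/p^{m}$. Any ring isomorphism $\phi\colon\mathcal{O}_K/\pi_K^{s}\to\mathcal{O}_L/\pi_L^{s}$ must carry $R_{0,K}$ isomorphically onto $R_{0,L}$, inducing an automorphism of $\mathcal{O}_F/p^{m}$, i.e.\ an element $\sigma\in\Gal(F/\Q_p)$; extending $\sigma$ to $\overline{\Q_p}$ and replacing $K$ by $\sigma^{-1}(K)$ (isomorphic to $K$, same $e,f$) I may assume $\phi$ is the identity on $R_{0}$. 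Writing $\mathcal{O}_K=\mathcal{O}_F[\varpi_K]$ with $\varpi_K$ a root of an Eisenstein polynomial $g_K=x^{e}+\sum_{i<e}a_ix^{i}\in\mathcal{O}_F[x]$, and likewise $g_L=x^{e}+\sum_{i<e}b_ix^{i}$ for $L$, the image $\varpi_L:=\phi(\varpi_K)$ is a uniformizer of $\mathcal{O}_L/\pi_L^{s}$; comparing the relation $\varpi_L^{e}=-\sum_{i<e}a_i\varpi_L^{i}$ coming from $\phi$ with $g_L(\varpi_L)\equiv0$, and using that the monomials $\varpi_L^{i}$ ($0\le i<e$) take pairwise distinct $v_L$-values modulo $e$, I obtain congruences $a_i\equiv b_i\pmod{\maxideal_F^{\,m_i}}$ with $m_i=\lceil(s-i)/e\rceil$; hence $v_L\big(g_K(\varpi_L)\big)=v_L\big(\sum_{i<e}(a_i-b_i)\varpi_L^{i}\big)\ge\min_{i}(em_i+i)\ge s$.

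Finally I would apply Krasner's lemma. Let $\varpi_K^{(1)},\dots,\varpi_K^{(e)}$ be the roots of $g_K$ and $\varpi_K$ the one closest to $\varpi_L$; a short ultrametric computation from $g_K(\varpi_L)=\prod_j(\varpi_L-\varpi_K^{(j)})$ gives $v_L(\varpi_L-\varpi_K)\ge v_L(g_K(\varpi_L))-d$, where $d=v_L(g_K'(\varpi_K))=v_L(\mathfrak{d}_{K/F})=v_K(\mathfrak{d}_{K/\Q_p})$, and if Krasner's strict inequality failed for $\varpi_K$ one would in addition be forced into $v_L(g_K(\varpi_L))\le 2d$. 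Since the different obeys the sharp bound $v_K(\mathfrak{d}_{K/\Q_p})\le e-1+v_K(e)=e-1+e\,v_p(e)$, the hypothesis on $s$ is calibrated exactly so that the estimate $v_L(g_K(\varpi_L))\ge s$ defeats the relevant multiple of $d$; Krasner's lemma then gives $F(\varpi_K)\subseteq F(\varpi_L)=L$, and comparing degrees ($[F(\varpi_K):F]=e=[L:F]$) yields $F(\varpi_K)=L$, i.e.\ $K\isom L$. I expect the main obstacle to be exactly this last quantitative step: one must pin down precisely which truncation of the Eisenstein data is determined by the abstract ring isomorphism and then match it against the sharp upper bound for $\mathfrak{d}_{K/\Q_p}$, so that the threshold for $s$ comes out as stated rather than as a cruder bound such as $s>2(e-1+e\,v_p(e))$; handling tame and wild ramification uniformly (the term $v_p(e)e$) and the emergence of the factor $p/(p-1)$ (which is where the convergence of the $p$-adic logarithm enters the optimal estimates) is the delicate part.
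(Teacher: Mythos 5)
The paper offers no argument for this proposition: its ``proof'' is the single line citing Proposition 2.1(iii) of the reference \cite{keating}, so there is nothing internal to compare your sketch against and I can only judge it on its own terms. Your strategy --- read off $p$, $f$, $s$ and then $e$ from the finite ring, identify the maximal unramified part as the subring generated by the roots of $x^q=x$, reduce to comparing Eisenstein polynomials over the common unramified subfield $F$, and finish with Krasner's lemma --- is the standard route to statements of this kind, and the intermediate estimate $v_L\bigl(g_K(\tilde\varpi_L)\bigr)\ge s$ for a lift $\tilde\varpi_L$ of $\phi(\varpi_K)$ is correct.

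Two points are genuine gaps rather than omitted routine detail. First, ``from the hypothesis on $s$ one checks $s>e$'' is not a deduction from the hypothesis as printed: if $K/\Q_p$ is tamely ramified then $v_p(e)=0$ and the condition reads $s>p/(p-1)$, i.e.\ $s\ge 2$, which can be far smaller than $e$; in that range $\mathcal{O}_K/\pi^s\cong\F_q[x]/(x^s)$ and the conclusion fails (e.g.\ $K=\Q_5(5^{1/3})$, $L=\Q_5(5^{1/2})$, $s=2$). You are silently re-reading the bound as $s>e\bigl(p/(p-1)+v_p(e)\bigr)$; that is surely what is intended, but ``otherwise the statement would be false'' is a correction of the hypothesis, not a step in the proof, and must be flagged as such. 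Second, and more seriously, the decisive quantitative step is not closed, as you yourself concede. The Krasner/Newton argument you set up needs $v_L\bigl(g_K(\tilde\varpi_L)\bigr)>2d$ with $d=v_K(\mathfrak{d}_{K/F})\le e-1+e\,v_p(e)$, and hence, combined with your estimate $v_L\bigl(g_K(\tilde\varpi_L)\bigr)\ge s$, proves the conclusion only under the stronger hypothesis $s>2\bigl(e-1+e\,v_p(e)\bigr)$. The threshold in the statement is essentially a single multiple of the different bound plus the $p/(p-1)$ correction, not twice it; bridging that factor of two is exactly the content of the cited result and requires either the refined Krasner estimate that tracks the individual valuations $v(\varpi_K-\varpi_K^{(j)})$ through the ramification filtration (they are not all equal), or a different argument. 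As written, your proposal establishes a correct but strictly weaker proposition.
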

\begin{proof} This is Proposition 2.1(iii) on page 17 in \cite{keating}.
\end{proof}
 
Now we can prove the theorem on isomorphism of adele rings.
\begin{thm}\label{isom} Let $K$ an $L$ be number fields. If $\A_K$ and $\A_L$ are elementarily equivalent, then they are isomorphic. 
\end{thm}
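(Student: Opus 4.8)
The plan is to combine the elementary invariants extracted in Sections 3.5--3.6 with the local refinement result (Keating's Proposition) and Iwasawa's theorem. First I would observe that $\A_K \equiv \A_L$ forces $[K:\Q]=[L:\Q]=:n$ (the corollary in 3.5) and, by Theorem \ref{theorem1} together with the discussion in 3.6, forces $K$ and $L$ to be arithmetically equivalent: for all but finitely many $p$ the splitting types $\Sigma_{p,K}$ and $\Sigma_{p,L}$ agree, since each equality "$P_K(A)=P_L(A)$" (equivalently, "there exist infinitely many / exactly the primes $p$ with $\Sigma_{p,\cdot}=A$ among those detectable by the idempotent-counting sentences") is expressible by a first-order sentence in the ring language of the adele ring via the stalk analysis. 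By Proposition \ref{eq} this already yields a bijection $\phi\colon V_K^f \to V_L^f$ with $K_{\mathfrak p}\cong L_{\phi(\mathfrak p)}$ for all but finitely many $\mathfrak p$, and the archimedean data also match by Theorem \ref{theorem1}. So the whole problem reduces to the finitely many exceptional (ramified) primes.

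The heart of the argument is then to upgrade the agreement of $K_{\mathfrak p}$ and $L_{\phi(\mathfrak p)}$ to \emph{all} primes, including the ramified ones. Here I would argue as follows. Fix a rational prime $p$ that ramifies in $K$ (hence, since $K$ and $L$ are arithmetically equivalent, also in $L$, as ramified primes are determined by the common discriminant per the Note in 3.6). For each such $p$ the stalks of $\A_K$ of residue characteristic $p$ are, up to isomorphism, the local fields $K_{\mathfrak P_1},\dots,K_{\mathfrak P_g}$; the multiset of these local fields is an invariant of the $L^{Boole+}$-structure, but a priori elementary equivalence only controls them up to \emph{elementary equivalence} of the stalks, not isomorphism. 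The key point is that a finite extension of $\Q_p$ is, as a valued field, determined up to isomorphism by a single first-order sentence among finite extensions of $\Q_p$ of bounded degree --- indeed by Keating's Proposition, it is determined by the finite ring $\mathcal{O}/\pi^s\mathcal{O}$ for $s$ larger than the explicit bound $p/(p-1)+v_p(e)e$, and "being the reduction $\mathcal{O}/\pi^s\mathcal{O}$ of the stalk" is uniformly interpretable in the adele ring (the stalk's valuation ring, its maximal ideal, and the quotient by the $s$-th power are all definable, uniformly in the stalk, once $n$ and hence the relevant bounds are fixed). Since $n$ is known and $e\le \log_2 n$, the bound on $s$ depends only on $p$ and $n$; so there are, for each of the finitely many ramified $p$, finitely many sentences pinning down each stalk up to isomorphism. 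Elementary equivalence of $\A_K$ and $\A_L$ therefore transfers these, giving $K_{\mathfrak P}\cong L_{\mathfrak Q}$ matching up the ramified stalks as well, and we obtain a bijection $\phi\colon V_K^f\to V_L^f$ with $K_{\mathfrak p}\cong L_{\phi(\mathfrak p)}$ for \emph{every} finite prime $\mathfrak p$.

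Finally, by Iwasawa's theorem (Theorem \ref{iwasawa}) this bijection of completions yields $\A_K^{fin}\cong\A_L^{fin}$, and combining with the equality of the numbers of real and complex places (from Theorem \ref{theorem1}) gives $\A_K\cong\A_L$.

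I expect the main obstacle to be the second paragraph: verifying carefully that, for a ramified prime $p$, the isomorphism type of each residue-characteristic-$p$ stalk of $\A_K$ is captured by first-order sentences of $\A_K$ in the ring language --- that is, that the finite quotient $\mathcal{O}/\pi^s\mathcal{O}$ with the correct $s=s(p,n)$ is uniformly definable in the adele ring and that its isomorphism type (a statement about a finite ring, hence first-order) is thereby an elementary invariant. This requires knowing the bound on $s$ purely from data ($p$ and $n$) already shown to be elementarily detected, using $e\le\log_2 n$ and the definability of finite idempotents and of the stalk valuation apparatus from \cite{DM-ad}; one must also ensure the matching of ramified stalks is consistent with the already-constructed bijection on the unramified part, which follows since both are organized by residue characteristic. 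Modulo this bookkeeping, the other steps are immediate from the cited results.
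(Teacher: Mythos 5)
Your proposal is correct and follows essentially the same route as the paper's own proof: reduce via arithmetic equivalence and Proposition \ref{eq} to finitely many exceptional primes, pin down each exceptional stalk by a first-order sentence describing the finite ring $\mathcal{O}/\pi^{s}\mathcal{O}$ (uniformly definable by the CDLM valuation-ring formula), upgrade to isomorphism of local fields via Keating's bound, and conclude with Iwasawa's theorem. The only cosmetic difference is that you identify the exceptional set with the ramified primes, whereas the paper simply takes it to be the finite set where Proposition \ref{eq} fails to give local isomorphisms; the argument is unaffected.
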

\begin{proof}
Suppose that $\A_K$ and $\A_L$ are elementarily equivalent. 
%By Proposition  rings of finite adeles $\A_K^{fin}$ and $\A_L^{fin}$ are isomorphic,
%Then as in {\it JD + AM, Notes, page 19}, 
Then for almost all $p$, the decomposition types of $p$ in $K$ and $L$ are the same, and $\zeta_K(s)=\zeta_L(s)$ for all $s$. By Proposition \ref{eq}, there is a bijection $\phi: V_{K}^f \rightarrow V_{L}^f$ such that the local fields $K_{\frak p}$ and 
$L_{\phi(\frak p)}$ are isomorphic for all but finitely many primes $\frak p \in V_{K}^f$. Let 
$S:=\{\frak p_1,\dots,\frak p_t\}$ denote the set of primes such that for all $\frak p$ outside $S$ there is an isomorphism of completions. 

By Theorem \ref{iwasawa} it suffices to prove that 
there is a bijection 
$\phi: V_{K}^f \rightarrow V_{L}^f$ such that the local fields $K_{\frak p}$ and 
$L_{\phi(\frak p)}$ are isomorphic for all primes $\frak p \in V_{K}^f$. 

We know that there is an isomorphism for the primes $\frak p$ outside $S$. So it suffices to prove that there is a bijection 
defined on primes from $S$
$$\frak p \rightarrow \psi(\frak p)$$ 
such that the corresponding completions $K_{\frak p}$ and $L_{\psi(\frak p)}$ are isomorphic.

We consider the (rational) primes $\Omega:=\{p_1,\dots,p_l\}$ such that the primes in $S$ lie over some $p_j\in \Omega$, $j\leq l$, and the corresponding $p$-adic fields $\Q_{p_1},\dots,\Q_{p_l}$.

For any $\frak p\in S$, consider the completion $K_{\frak p}$. Let $e$ and $f$ denote its ramification index and residue degree respectively. For any finite extension $F$ of $\Q_p$ for any $p$, by the uniform definition of the valuation ring in \cite{CDLM}  the valuation ring, its maximal ideal, and its uniformizer are defined independently of the field by a first-order formula. Therefore $\mathcal{O}_F/\pi^s\mathcal{O}_F$ can be expressed independently of the field $F$ for any $s$, and finite extension $F$ of $\Q_p$ for any $p$. 

On the other hand by finiteness, we can write down a sentence in the language of rings which characterizes the finite ring $\mathcal{O}_{K_{\mathfrak p}}/\pi^s\mathcal{O}_{K_{\mathfrak p}}$ up to isomorphism, for each of 
the primes $\frak p$ from $S$, and each $s$. Denote this sentence by $\Phi(s,\frak p)$. 

Now consider the sentence "for all idempotents $e$ with stalk $e\A_K$ of 
residue characteristic $p$ among the primes in $\Omega$, for some $\mathfrak p$, $\Phi(s,\frak p)$ holds in 
$\mathcal{O}/\pi^s\mathcal{O}$, where $\mathcal{O}$ and $\pi$ {\it denote the sets defined by 
the formulas defining these sets uniformly for all local fields}, and $s$ satisfies $s>(p/p-1+v_p(e)e)$.

Since $\A_K$ and $\A_L$ are elementarily equivalent, this sentence holds in $\A_L$. This means that for each completion 
$L_{\mathfrak p}$ of residue characterisic $p$, the ring 
$$\mathcal{O}_{L_{\mathfrak p}}/\pi^s\mathcal{O}_{L_{\mathfrak p}}$$
is isomorphic to the ring
$$\mathcal{O}_{K_{\mathfrak p'}}/\pi^s\mathcal{O}_{K_{\mathfrak p'}},$$
for some $\frak p'$ lying over $p$, where $p$ is from $\Omega$.

Since there is a bijection between the primes in $L$ and those of $K$, we deduce by Proposition 0.3 that any completion 
$L_{\mathfrak p}$ is isomorphic to a completion $K_{\mathfrak p'}$ for some prime $\mathfrak p'$ and vice versa.
In other words, there is a bijection defined on the primes in $S$ inducing an isomorphism on completions. The proof is complete.
\end{proof}

\

3.8. {\it Decidability of $\A_K$.} We have a direct product decomposition 
$$\A_K=\prod_{v ~ Archimidean} K_v \times \A_K^{fin}.$$
By Feferman-Vaught \cite{FV}, it suffices to show that $\A_K^{fin}=\prod_{V_K^{fin}} K_v$ is decidable. We have only to show that the theory of the class of all $\cO_{\cP}$ is decidable. Now we separate off the finitely many ramified $p$ and the finitely many ramified $\cO_{\cP}$ "above them". Again make product decomposition, and note the $\cO_{\cP}$ just 
mentioned are all finitely ramified. By \cite{} they and their product are decidable. Now we turn our attention to the complementary product of the unramified $\cO_{\cP}$. It suffices to prove this is decidable. Now given $K$, there are 
only finitely many non-empty splitting types $A$. Again factorize into various $\prod_{P_K(A)} ...$. To show these are decidable
again by factorizing by inertia, we reduce to the problem of decidability, for fixed $f$, of $\prod{P_K(a)}$, dimension $f$ unramified extensions of $\Q_p$.

\bibliographystyle{acm}
\bibliography{bibadeles}

\end{document}